\documentclass[reqno]{amsart}
\usepackage{amsmath}
\usepackage{amssymb}
\usepackage{amsthm}
\usepackage{hyperref}
\hypersetup{backref=true}
\usepackage{cases}

\title{A generalization of partition identities of G\"ollnitz-Gordon, Rogers-Ramanujan and Nandi}

\author{Motoki Takigiku}
\address{Graduate School of Natural Science and Technology, Okayama University, Okayama 700-8530, Japan (current affiliation : Dharmacapital Pte. Ltd.)}

\author{Shunsuke Tsuchioka}
\address{Department of Mathematical and Computing Sciences, Institute of Science Tokyo, Tokyo 152-8550, Japan}
\email{tshun@kurims.kyoto-u.ac.jp}

\thanks{M.T.\,was supported by Start-up research support from Okayama University.
S.T.\,was supported by the Research Institute for Mathematical
Sciences, an International Joint Usage/Research Center located in Kyoto
University, the TSUBAME3.0 supercomputer at Tokyo Institute of Technology,
the TSUBAME4.0 supercomputer at Institute of Science Tokyo,
JSPS Kakenhi Grant 20K03506, 23K03051, 26K06722, JST CREST Grant Number JPMJCR2113, Japan, 
Leading Initiative for Excellent Young Researchers, MEXT, Japan
and JST FOREST Program (Grant Number JPMJFR246C, Japan).}

\date{Jul 2, 2026}
\keywords{Integer partitions,
Rogers-Ramanujan type identities,
Affine Lie algebras,
Bailey lattice,
Lepowsky-Wilson program}
\subjclass[2020]{Primary~11P84, Secondary~05A17}

\usepackage{tikz}

\def\node#1#2{\overset{#1}{\underset{#2}{\circ}}}

\def\ver#1#2{\overset{{\llap{$\scriptstyle#1$}\displaystyle\circ{\rlap{$\scriptstyle#2$}}}}{\scriptstyle\vert}}

\newtheorem{Thm}{Theorem}[section]

\newtheorem{Conj}[Thm]{Conjecture}

\newtheorem{Prop}[Thm]{Proposition}

\newtheorem{Rem}[Thm]{Remark}

\newtheorem{Ex}[Thm]{Example}

\newcommand{\BETAUNIT}{\beta^{\mathsf{U}}}
\newcommand{\ALPHAUNIT}{\alpha^{\mathsf{U}}}
\newcommand{\KYU}[2]{Q_{#1}^{(#2)}}
\newcommand{\ERU}[3]{R_{#1,#2}^{(#3)}}
\newcommand{\KYUU}[1]{Q_{#1}}
\newcommand{\ERUU}[2]{R_{#1,#2}}
\newcommand{\LAMBDA}[1]{\Lambda'_{#1}}
\newcommand{\qbinom}{\genfrac{[}{]}{0pt}{}}
\newcommand{\GG}{\mathsf{GG}}
\newcommand{\HH}{H}
\newcommand{\NAN}{\mathcal{N}}
\newcommand{\PC}{\mathcal{C}}
\newcommand{\PD}{\mathcal{D}}
\newcommand{\PT}{\stackrel{\mathsf{PT}}{\sim}}
\DeclareMathOperator{\PAR}{\mathsf{Par}}
\DeclareMathOperator{\ZZ}{\mathsf{Z}}
\newcommand{\RR}{\mathsf{RR}}

\begin{document}
\maketitle

\begin{abstract}  
We propose Andrews-Gordon type series for certain level 2 standard modules
of type $A^{(2)}_{\textrm{odd}}$, and
prove
the corresponding sum-product identities 
except for $A^{(2)}_{6n+3}$.
These identities generalize the identities of G\"ollnitz-Gordon (mod 8),
Rogers-Ramanujan (mod 5) and (partially) Nandi (mod 14).
\end{abstract}

\section{Introduction}
In this paper, we adopt the usual convention on the $q$-Pochhammer symbol
$(a_1,\dots,a_m;q)_{n}=\prod_{k=1}^{m}\prod_{c=1}^{n}(1-a_kq^{c-1})$
for $n\in\mathbb{Z}_{\geq 0}\sqcup\{\infty\}$ and $m\geq 1$.
As usual, the symbol $\delta_{m,n}$ stands for the Kronecker delta.

\subsection{The Lepowsky-Wilson program}
Since the vertex operator proof of the Rogers-Ramanujan identities
by Lepowsky and Wilson~\cite[\S10]{LW3}, 
it has been expected that, for an integrable highest weight module
of highest weight $\lambda$ of the affine Lie algebra of type $X^{(r)}_N$, 
the principal character $\chi_{X^{(r)}_N}(\lambda)$ (see ~\cite[\S7]{LW3}) should admit
Rogers-Ramanujan type infinite sum expansions.
In the companion paper~\cite[Theorem 14.4]{LW4}, Lepowsky and Wilson gave
a vertex operator interpretation of the
Andrews-Gordon-Bressoud identities, recalled as Theorem \ref{AGTHM} below. 
The case $\ell=\varepsilon=1$ recovers the Rogers-Ramanujan identities.
\begin{Thm}[{\cite[\S3.2]{Sil}}]\label{AGTHM}
Let $\varepsilon=0,1$ and $\ell\geq 1$. For $1\leq b\leq \ell+1$, we have
\begin{align*}
\sum_{j_1,\dots,j_{\ell}\geq 0}
\frac{q^{J_1^2+\dots+J_{\ell}^2+J_{b}+\dots+J_{\ell}}}{(q;q)_{j_1}\cdots(q;q)_{j_{\ell-1}}(q^{2-\varepsilon};q^{2-\varepsilon})_{j_{\ell}}}
=\frac{(q^b,q^{2\ell+\varepsilon+2-b},q^{2\ell+\varepsilon+2};q^{2\ell+\varepsilon+2})_{\infty}}{(q;q)_{\infty}}.
\end{align*}
Here, we put $J_k=j_k+\dots+j_{\ell}$ for $1\leq k\leq \ell$. When $b=\ell+1$, the sum  $J_b+\dots+J_{\ell}$ is understood to be empty (and is defined to be 0).
\end{Thm}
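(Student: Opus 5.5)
The plan is the standard Bailey-chain / Bailey-lattice route, as in Andrews' and Bressoud's proofs. The building block is the unit Bailey pair relative to $a=1$ or $a=q$:
\[
\beta_n=\frac{\delta_{n,0}}{1}, \qquad \alpha_n=(-1)^n q^{\binom{n}{2}}(1-q^{2n})/(1-q) \ \text{(suitably normalized)}.
\]
Equivalently, the pair $\beta_n=1/(q;q)_{2n}$ with $\alpha_n$ given by the Rogers–Selberg coefficients $(-1)^nq^{n(3n-1)/2}(1+q^n)$. We iterate Bailey's lemma in the limiting form $\rho_1,\rho_2\to\infty$, which sends a pair $(\alpha_n,\beta_n)$ to
\[
\alpha'_n=a^nq^{n^2}\alpha_n, \qquad \beta'_n=\sum_{j\le n}\frac{a^jq^{j^2}}{(q;q)_{n-j}}\beta_j .
\]
Applying it $\ell-1$ times generates the factors $q^{J_k^2}/(q;q)_{j_k}$ for $k<\ell$, with $n=J_1$ summed at the end.

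The innermost variable $j_\ell$ carries $(q^{2-\varepsilon};q^{2-\varepsilon})_{j_\ell}$, so the starting pair must be chosen according to $\varepsilon$. For $\varepsilon=1$ (Andrews–Gordon) I would start from the unit pair in base $q$ and apply one more Bailey step, so $j_\ell$ also gets $(q;q)_{j_\ell}$. For $\varepsilon=0$ (Bressoud) I would instead start from a pair whose $\beta$-side yields $q^{J_\ell^2}/(q^2;q^2)_{j_\ell}$. A natural choice is the pair derived from $\beta_n=1/(q^2;q^2)_n$ (equivalently $1/((q;q)_n(-q;q)_n)$), using the Bailey lemma with one parameter $\rho=-q^{1/2}$, or $\rho=-1$, kept finite.

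The linear term $J_b+\dots+J_\ell$ requires working relative to $a=q$ for the last $\ell-b+1$ steps and relative to $a=1$ for the first $b-1$ steps. Passing from $a=q$ to $a=1$ uses the Bailey lattice (Agarwal–Andrews–Bressoud): this step changes $a$ and introduces the extra $\alpha$-terms which, after telescoping, give the $q^{b}$ dependence in the theta function. Concretely, the final $\alpha_n$ will be a difference of two monomials of the form $q^{(2\ell+\varepsilon+2)\binom{n+1}{2}\pm\dots}$ times $q^{-bn}$-type factors. Substituting into $\sum_n\beta_n=\frac{1}{(q;q)_\infty}\sum_n\alpha_n$ (from $\beta_n=\sum\alpha_r/(q)_{n-r}(aq)_{n+r}$ letting $n\to\infty$) gives a bilateral series $\sum_{n\in\mathbb{Z}}(-1)^nq^{(2\ell+\varepsilon+2)\binom n2+\dots}$.

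Finally, the Jacobi triple product converts this bilateral sum into $(q^b,q^{2\ell+\varepsilon+2-b},q^{2\ell+\varepsilon+2};q^{2\ell+\varepsilon+2})_\infty$, finishing the identity. The main obstacle is the Bailey lattice step, namely the bookkeeping that the $a=q\to a=1$ transition with the inhomogeneous term yields exactly the exponent $J_b+\dots+J_\ell$ and the correct theta quotient, including the boundary case $b=\ell+1$, which is pure $a=1$ throughout. For $\varepsilon=0$ the extra difficulty is choosing the seed pair so that the base-$q^2$ Pochhammer appears; I would first try the Slater pair with $\beta_n=1/(q^2;q^2)_n$ relative to $a=1$ and check small $\ell$ numerically. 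Alternatively one could follow Andrews' $q$-difference-equation proof via the functions $J_{k,i}(a;x;q)$, which handles all $b$ uniformly, and fall back on it if the lattice bookkeeping becomes unwieldy.
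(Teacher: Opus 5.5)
The paper gives no proof of this statement; it is quoted from Sills' book. Your Bailey-lattice strategy is the standard route, and it is exactly Theorem \ref{BLATTICE}, the tool the paper uses for Theorem \ref{MAINA2oddN3}. With the right seeds, the whole statement is a direct specialization of that theorem, with no extra lattice bookkeeping and no separate treatment of $b=\ell+1$.

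For $\varepsilon=1$ your plan works. Take the unit pair of Example \ref{UBP} relative to $a=q$, so $\alpha_n=(-1)^nq^{\binom n2}(1-q^{2n+1})/(1-q)$, and put $k=\ell+1$, $j=b-1$. Then $\beta_{J_{\ell+1}}$ forces $J_{\ell+1}=0$, and the exponent on the left becomes $J_1^2+\dots+J_\ell^2+J_b+\dots+J_\ell$. The right side is
\[
\frac{1}{(q;q)_\infty}\sum_{t\geq 0}(-1)^tq^{(2\ell+3)\binom t2+(2\ell+3-b)t}\bigl(1-q^{b(2t+1)}\bigr).
\]
Its second term is the image of the first under $t\mapsto -t-1$, so this is a bilateral sum, and the triple product with modulus $2\ell+3$ finishes it.

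The genuine gap is the seed for $\varepsilon=0$. The lattice only lowers $a$ from $q$ to $1$, so for any $b\leq\ell$ the seed must be a Bailey pair relative to $a=q$.
\begin{itemize}
\item The pair you propose to try first is $\beta_n=1/(q^2;q^2)_n$ relative to $a=1$, from $\rho_1=-1$, $\rho_2\to\infty$, with $\alpha_n=2(-1)^nq^{n^2}$ for $n\geq 1$. It only produces $\sum_{t\in\mathbb{Z}}(-1)^tq^{(\ell+1)t^2}$, i.e.\ the case $b=\ell+1$.
\item The choice $\rho_1=-q^{1/2}$ gives $(-q^{1/2};q)_n$ rather than $(-q;q)_n$ in $\beta_n$.
\end{itemize}
What you need is Bailey's lemma applied to the unit pair relative to $a=q$ with $\rho_1=-q$, $\rho_2\to\infty$. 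Then $\alpha'_n=q^{\binom{n+1}{2}}\alpha_n$, and you get the pair
\[
\beta_n=\frac{1}{(q^2;q^2)_n},\qquad \alpha_n=(-1)^nq^{n^2}\frac{1-q^{2n+1}}{1-q}\qquad\text{relative to }a=q.
\]
Apply Theorem \ref{BLATTICE} with $a=q$, $k=\ell$, $j=b-1$, which is allowed for $1\leq b\leq \ell+1$. This gives exactly the $\varepsilon=0$ left side, and the right side is
\[
\frac{1}{(q;q)_\infty}\sum_{t\geq0}(-1)^tq^{(\ell+1)t^2+(\ell+1-b)t}\bigl(1-q^{b(2t+1)}\bigr)=\frac{1}{(q;q)_\infty}\sum_{t\in\mathbb{Z}}(-1)^tq^{(2\ell+2)\binom t2+(2\ell+2-b)t}.
\]
This is the modulus-$(2\ell+2)$ product.

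Smaller slips:
\begin{itemize}
\item The unit-pair $\alpha_n$ you wrote matches neither case. Relative to $a=q$ it is as above; relative to $a=1$ it is $(-1)^nq^{\binom n2}(1+q^n)$ for $n\geq 1$.
\item The coefficients $(-1)^nq^{n(3n-1)/2}(1+q^n)$ go with $\beta_n=1/(q;q)_n$ relative to $a=1$, one Bailey step from the unit pair. They do not go with $\beta_n=1/(q;q)_{2n}$, so the claimed equivalence is false.
\item The limiting relation is $\lim_{n\to\infty}\beta_n=\frac{1}{(q;q)_\infty(aq;q)_\infty}\sum_{n\geq 0}\alpha_n$, not an identity for $\sum_n\beta_n$.
\end{itemize}
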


Note that the right hand side is equal to
$\chi_{A^{(1)}_{1}}((2\ell+\varepsilon+1-b)\Lambda_0+(b-1)\Lambda_1)$ (see Figure \ref{dynk}).
For the current status of the Lepowsky-Wilson program,
see a brief survey in ~\cite[\S1.4]{Tsu2}.

\subsection{The main result}
In this paper, we focus on level 2 of type $A^{(2)}_{2\ell-1}$ (see Figure \ref{dynk}) for $\ell\geq 3$.
Any level 2 dominant integral weight of $A^{(2)}_{2\ell-1}$ is given by
$\LAMBDA{b}=(\delta_{b,0}+\delta_{b,1})\Lambda_0+\Lambda_{b}$ for some $0\leq b\leq \ell$ modulo the Dynkin diagram automorphism, and 
the principal character of the corresponding
standard module is given by
\begin{align*}
  \chi_{A^{(2)}_{2\ell-1}}(\LAMBDA{b})
  =
  \frac{(q^{2\ell+2};q^{2\ell+2})_{\infty}}{(q^2;q^2)_{\infty}}
  \frac{(q^{2b},q^{2\ell+2-2b};q^{2\ell+2})_{\infty}}{(q^{b},q^{2\ell+2-b};q^{2\ell+2})_{\infty}}
  = \frac{Q(q^{2\ell+2},q^b)}{(q^2;q^2)_{\infty}}
\end{align*}
for $1\leq b\leq \ell$. Here, $Q$ is the quintuple product~\cite[(1.74)]{Sil} 
\begin{align}
  Q(w,x)= (-w/x,-x,w;w)_{\infty}(w/x^2,wx^2;w^2)_{\infty}.
\label{WQ}
\end{align}

Our main result gives Rogers-Ramanujan type identities
whose infinite products are the principal characters
of certain level 2 standard modules of type $A^{(2)}_{6n\pm 1}$.

\begin{Thm}\label{MAINA2oddN3}
  Let $\varepsilon=0,1$ and $n\geq 1$. For $0\leq b\leq 2n+1-\varepsilon$, we have
  \begin{align*}
    \sum_{i,j_1,\dots,j_{2n-\varepsilon}\geq 0}
    \frac{q^{\KYU{n}{\varepsilon}(i,j_1,\dots,j_{2n-\varepsilon})+\ERU{n}{b}{\varepsilon}(i,j_1,\dots,j_{2n-\varepsilon})}}{(q^2;q^2)_i(q^2;q^2)_{j_1}\cdots(q^2;q^2)_{j_{2n-\varepsilon}}}
      =\chi_{A^{(2)}_{6n+(-1)^{\varepsilon}}}(\LAMBDA{n+b}).
  \end{align*}
  Here, we put $J_k=j_k+\dots+j_{2n-\varepsilon}$ for $1\leq k\leq 2n-\varepsilon$ and
\begin{align*}
    \KYU{n}{\varepsilon}(i,j_1,\dots,j_{2n-\varepsilon})
    &=
    (n+1-\varepsilon)i^2+2(J_1^2+\dots+J_{2n-\varepsilon}^2)+
    2i(J_1+\dots+J_{2n-\varepsilon}),\\
    \ERU{n}{b}{\varepsilon}(i,j_1,\dots,j_{2n-\varepsilon})
    &=
    (b-1+\varepsilon)i+2(J_{2n-\varepsilon+2-b}+J_{2n-\varepsilon+3-b}+\dots+J_{2n-\varepsilon}).
  \end{align*}
\end{Thm}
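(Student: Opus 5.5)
Our plan is to separate the sum over $i$ from the sums over $j_1,\dots,j_{2n-\varepsilon}$ and to evaluate the inner multisum by the Andrews-Gordon machinery (Theorem \ref{AGTHM}, or rather its Bailey-lattice form) in the base $q^2$. We set $N=J_1$ and fix $i$. The inner sum
\begin{align*}
\sum_{j_1,\dots,j_{2n-\varepsilon}}\frac{q^{2(J_1^2+\dots+J_{2n-\varepsilon}^2)+2iJ_1+2iJ_2\cdots+2(J_{2n-\varepsilon+2-b}+\dots+J_{2n-\varepsilon})}}{(q^2;q^2)_{j_1}\cdots(q^2;q^2)_{j_{2n-\varepsilon}}}
\end{align*}
has the shape of an Andrews-Gordon multisum in $q^2$ with a linear twist $2i(J_1+\dots+J_{2n-\varepsilon})$. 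We plan to absorb this twist as follows. We write the sum as $\sum_{N}q^{2N^2+2iN}\cdot(\text{finite multisum with } J_1=N)/(q^2;q^2)_{N-J_2}$. We then recognize the inner finite sum as the Bailey-chain iterate $\beta_N$ of a unit Bailey pair in base $q^2$, which is a Bailey lattice step when $b$ is inserted, following Agarwal-Andrews-Bressoud. The $i$-dependence then enters exactly like an extra parameter $a=q^{2i}$, or rather like an $(i)$-shift of the Bailey pair relative to $a$.

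The concrete steps are as follows. First, by iterating the Bailey lemma $2n-\varepsilon$ times (with $\rho_1,\rho_2\to\infty$ at each step) starting from the unit pair relative to $a=1$ in base $q^2$, with one lattice step at level $2n-\varepsilon+1-b$, we obtain $\beta$ such that
\begin{align*}
\sum_{j}\frac{q^{2(J_2^2+\cdots)+\cdots}}{\cdots}=\beta_{N}=\sum_{r}\frac{\alpha_r}{(q^2;q^2)_{N-r}(q^2;q^2)_{N+r}}
\end{align*}
Here $\alpha_r$ is an explicit signed theta-type coefficient, $\alpha_r\approx (-1)^r q^{(2n-\varepsilon)\cdot 2r^2+\dots}(\text{linear in }b)$. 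Second, we substitute this into the full sum and perform the sums over $i$ and $N$. The relevant summand is $\sum_{i,N}q^{(n+1-\varepsilon)i^2+(b-1+\varepsilon)i+2N^2+2iN}/\big((q^2;q^2)_i(q^2;q^2)_{N-r}(q^2;q^2)_{N+r}\big)$. We would handle the pair $(i,N)$ by writing $i$ and $N$ through a change of variables so that $2N^2+2iN+(n+1-\varepsilon)i^2$ decouples. An alternative is to first sum over $N$ using the $q$-Gauss or $q$-Chu-Vandermonde-type identity $\sum_N q^{2N^2+2iN}/((q^2;q^2)_{N-r}(q^2;q^2)_{N+r})$, a known ${}_1\phi_1$/Durfee-type evaluation that yields $q^{2r^2+\dots}/(q^2;q^2)_\infty$ times a factor $1/(q^2;q^2)_{?}$ coupling $i$ and $r$. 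Third, we sum over $i$ by Euler's identity, or by an Andrews-Gordon one-dimensional sum with quadratic exponent $(n+1-\varepsilon)i^2$. This should produce a theta function in $r$ with modulus of the form $q^{2\ell+2}$, $\ell=3n+(1-\varepsilon)\cdot$, which matches $2\ell+2=6n+2\pm2$. Fourth, we recombine the resulting bilateral series into the quintuple product \eqref{WQ} $Q(q^{2\ell+2},q^{n+b})/(q^2;q^2)_\infty$ by the quintuple product identity.

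The main obstacle is the second and third steps. The coupling $(n+1-\varepsilon)i^2+2iJ_1$, with $i$ counted in base $q^2$ but appearing with odd linear terms, is not a standard Bailey-chain insertion. The fact that the modulus becomes $6n\pm1$ (a factor $3$) suggests that one needs a ``cubic'' or Bressoud-type Bailey pair, for example pairs of Slater type relative to $a=q^{2i}$. Our first attempt would be to treat $i$ as generating a $q$-binomial insertion $\sum_i q^{\cdots}/(q^2;q^2)_i$ that converts the last Bailey step into a Bailey lemma with $\rho_1=-q^{?}$, $\rho_2\to\infty$, as in the G\"ollnitz-Gordon case $n=1$. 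We would verify that base case first and then check that the resulting $\alpha_r$ is a difference of two quadratic exponentials. Once that holds, the quintuple product identity closes the argument.
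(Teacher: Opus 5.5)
There is a genuine gap, in fact two.

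\textbf{First gap: the inner sum is not a fixed-$a$ Bailey iterate.} Fixing $i$ and $J_1=N$ does not isolate the $i$-dependence. The finite multisum left over still carries $q^{2i(J_2+\dots+J_{2n-\varepsilon})}$, so it is not the $\beta_N$ of a chain built from the unit pair relative to $a=1$. The summand in your second step silently drops these factors; you mention $a=q^{2i}$, but then compute with $a=1$.

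The fix, which is what the paper does, is to make $i$ the Bailey parameter from the start. Apply Theorem~\ref{BLATTICE} to the unit pair with $a=q^{i+1}$, $k=2n+1-\varepsilon$, $j=2n+1-\varepsilon-b$, and then replace $q$ by $q^2$. Because $\BETAUNIT_{J_k}=\delta_{J_k,0}$, the last index disappears. The factor $a^{J_1+\dots+J_k}q^{-(J_1+\dots+J_j)}$ then produces exactly $2i(J_1+\dots+J_{2n-\varepsilon})+2(J_{2n-\varepsilon+2-b}+\dots+J_{2n-\varepsilon})$. The whole $j$-multisum collapses to a single sum over $t$. The factor $(q^{i+1};q)_t/\bigl((1-q^{i+1})(q;q)_t(q^{i+2};q)_\infty\bigr)$ coming from $\ALPHAUNIT_t$ and the lattice combines, after $q\to q^2$, with the outer $1/(q^2;q^2)_i$ into $\qbinom{t+i}{t}_{q^2}/(q^2;q^2)_\infty$. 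No ${}_1\phi_1$-type evaluation in $N$ and no cubic or Bressoud-type pair is involved.

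\textbf{Second gap: no mechanism for the theta series.} The more serious problem is what comes next, which your steps two to four leave as a hope (``should produce a theta function''). After the lattice step you are left with $B=\sum_{i,t\ge0}\qbinom{t+i}{t}_{q^2}(-1)^tq^{\cdots}\bigl(1-q^{2(j+1)(2t+i+1)}\bigr)$. The theta series does not come from a separate summation over $i$. The factor $3$ in the modulus comes from grouping by $m=i+2t$. With $T=q^{n+1-\varepsilon}$ and $S=q^{b-\varepsilon}$, the summand is a $q$-power times $T^{m^2}S^m\bigl(1-(T^2/S)^{2(m+1)}\bigr)$. One then needs the finite identity of Proposition~\ref{QUIN},
\[
\sum_{i+2t=m}\qbinom{t+i}{t}_{q^2}(-1)^tq^{i+t^2+t}=
\begin{cases}(-1)^Mq^{M(3M+2)}&m=3M,\\ (-1)^Mq^{(3M+1)(M+1)}&m=3M+1,\\ 0&m=3M+2,\end{cases}
\]
which is Warnaar's identity, or follows from a WZ certificate. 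For $\varepsilon=0$ one uses its image under $q\to q^{-1}$, i.e.\ $\delta=2\varepsilon-1$. Treat $T$ and $S$ as independent variables and compare coefficients of $S^m$ (including negative $m$, which come from the term $T^{(m+2)^2}S^{-(m+2)}$) with Watson's expansion $\sum_\ell(-1)^\ell w^{\ell(3\ell-1)/2}(x^{3\ell}+x^{1-3\ell})$ of \eqref{WQ}. This gives $B=Q(q^{6n+(-1)^{\varepsilon}+3},q^{n+b})$. Your proposal contains neither this identity nor any substitute for it, so the argument does not close.
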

When $b=0,1$, the sum $J_{2n-\varepsilon+2-b}+J_{2n-\varepsilon+3-b}+\dots+J_{2n-\varepsilon}$ in $\ERU{n}{b}{\varepsilon}$ is empty. Note that $6n+(-1)^{\varepsilon}=2\ell-1$ for $\ell=3n+1-\varepsilon$.
As we will see in \S\ref{classi}, our main result above
recovers the partition identities of
G\"ollnitz-Gordon, Rogers-Ramanujan and (partially) Nandi
when $(n,\varepsilon)=(1,1),(1,0),(2,1)$ respectively (see Remark \ref{recov}).
\begin{figure}\label{dynk}
\begin{align*}
\begin{array}{ll@{\qquad\qquad}ll}
A_1^{(1)} & \node{1}{\alpha_0} \Leftrightarrow \node{1}{\alpha_1} &
  A_{2\ell-1}^{(2)}  &\node{}{\alpha_1}-\node{\ver{}{\alpha_0}}{\alpha_2}\!\!{}-\node{}{\alpha_3}-\cdots-\node{}{\alpha_{\ell-1}}\!\!\!\Leftarrow\!\node{}{\alpha_\ell}
\end{array}
\end{align*}
\caption{The affine Dynkin diagrams $A^{(1)}_1$ and $A^{(2)}_{2\ell-1}$}
\end{figure}

\subsection{Future directions}
The fact that the left hand side in
Theorem \ref{MAINA2oddN3} is given as
a coherent family with respect to $b$ plays a key role in the proof (see \S\ref{mainproof}). 
This suggests a method for proving Rogers-Ramanujan type identities
arising from a family of affine Lie algebras. Namely, to prove a
Rogers-Ramanujan type identity,
it is sometimes useful to embed it into a family of identities.
\begin{Conj}\label{MAINA2oddConj}
  Let $n\geq 1$. For $0\leq b\leq 2n$, we have
  \begin{align*}
    \sum_{i,j,k,\ell_1,\dots,\ell_{2n}\geq 0}
    \frac{q^{\KYUU{n}(i,j,k,\ell_1,\dots,\ell_{2n})+\ERUU{n}{b}(i,j,k,\ell_1,\dots,\ell_{2n})}}{(q^2;q^2)_i(q^6;q^6)_j(q^6;q^6)_k(q^2;q^2)_{\ell_1}\cdots(q^2;q^2)_{\ell_{2n}}}
      =\chi_{A^{(2)}_{6n+3}}(\LAMBDA{n+2+b}).
  \end{align*}
  Here, we put $L_k=\ell_k+\dots+\ell_{2n}$ for $1\leq k\leq 2n$ and
\begin{align*}
    \KYUU{n}(i,j,k,\ell_1,\dots,\ell_{2n})
    &=
    n(i+3j+3k)^2+2(L_1^2+\dots+L_{2n}^2)+
    2(i+3j+3k)(L_1+\dots+L_{2n})+2iL_{2n},\\
    \ERUU{n}{b}(i,j,k,\ell_1,\dots,\ell_{2n})
    &=
    (b+2)i+(3b+2)j+(3b+4)k+2(L_{2n+1-b}+L_{2n+2-b}+\dots +L_{2n}).
  \end{align*}
\end{Conj}
When $b=0$, the sum $L_{2n+1-b}+L_{2n+2-b}+\dots +L_{2n}$ in $\ERUU{n}{b}$ is empty.
For a possible proof strategy, see Remark \ref{obs}.
As we will see in \S\ref{classi}, our conjecture above
recovers the partition identities of
Kanade-Russell-Bringmann-Jennings-Shaffer-Mahlburg-Rosengren 
when $n=1$ (see Remark \ref{KRanotherproof}).
For relationships between
the Lepowsky-Wilson program for $A^{(2)}_{\textrm{odd}}$ level 2
and the aforementioned partition identities, see Remark \ref{interp}.
It would also be interesting to compare
Theorem \ref{MAINA2oddN3} with the conjectural Rogers-Ramanujan type identities in ~\cite[\S6]{TT2}
whose infinite products are given by the principal characters of level 2 modules of type $A^{(2)}_{13}$.

\hspace{0mm}

\noindent{\bf Organization of the paper.} 
In \S\ref{pre}, we review standard tools in $q$-series, and give a proof of
Theorem \ref{MAINA2oddN3} in \S\ref{mainproof}.
In \S\ref{classi}, we briefly 
describe automatic calculations that connect the relevant partition conditions and Andrews-Gordon type series.

\section{Preparations}\label{pre}
For $m,n\in\mathbb{Z}$, we put $\qbinom{n}{m}_{q} = \frac{(q;q)_n}{(q;q)_m(q;q)_{n-m}}$ if $0\leq m\leq n$, and $\qbinom{n}{m}_q=0$ otherwise.

\subsection{The Bailey lattice}
In the following, Theorem \ref{BLEMMA} and Theorem \ref{BCHANGE} are referred to only in Remark \ref{obs}, and
are not needed in the proof of Theorem \ref{MAINA2oddN3}.

Recall that a Bailey pair relative to $a$ is a pair of
sequences $((\alpha_n)_{n\geq 0},(\beta_n)_{n\geq 0})$ that satisfies
\begin{align*}
\beta_n=\sum_{r=0}^{n}\frac{\alpha_r}{(q;q)_{n-r}(aq;q)_{n+r}}.
\end{align*}
for $n\geq 0$.
In this paper, we always assume $\alpha_0=\beta_0=1$ (see ~\cite[\S10.1]{ML}). 

\begin{Ex}[{\cite[\S2.3.6]{Sil}, \cite[(11.11)]{ML}}]\label{UBP}
  The unit Bailey pair 
  \begin{align*}
    \BETAUNIT_n=\delta_{n,0},\quad
    \ALPHAUNIT_n=(-1)^nq^{n\choose 2}\frac{1-aq^{2n}}{1-a}\frac{(a;q)_n}{(q;q)_n}
  \end{align*}
  is a Bailey pair relative to $a$.
\end{Ex}


\begin{Thm}[{\cite[Theorem 9.1]{Cha}, \cite[Corollary 11.1]{ML}}]\label{BLEMMA}
If $((\alpha_n)_{n\geq 0},(\beta_n)_{n\geq 0})$ is a Bailey pair relative to $a$,
then so is $((\alpha'_n)_{n\geq 0},(\beta'_n)_{n\geq 0})$, where
\begin{align*}
  \alpha'_n=a^nq^{n^2}\alpha_n,\quad
  \beta'_n=\sum_{r=0}^{n}\frac{a^rq^{r^2}}{(q;q)_{n-r}}\beta_r.
\end{align*}
\end{Thm}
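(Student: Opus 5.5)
We verify the defining relation directly: substitute the Bailey pair relation for $\beta_r$ into the definition of $\beta'_n$ and interchange the order of summation. This gives
\begin{align*}
\beta'_n=\sum_{j=0}^{n}\alpha_j\sum_{r=j}^{n}\frac{a^rq^{r^2}}{(q;q)_{n-r}(q;q)_{r-j}(aq;q)_{r+j}}.
\end{align*}
So it suffices to show, for each $0\leq j\leq n$, that the inner sum equals $a^jq^{j^2}/((q;q)_{n-j}(aq;q)_{n+j})$. The right-hand side of the desired relation for $\alpha'_j=a^jq^{j^2}\alpha_j$ is exactly $\sum_j \alpha_j$ times this quantity.

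To evaluate the inner sum, put $r=j+k$, $m=n-j$ and $A=aq^{2j}$. We use $a^{j+k}q^{(j+k)^2}=a^jq^{j^2}\,A^kq^{k^2}$ and $(aq;q)_{2j+k}=(aq;q)_{2j}(Aq;q)_k$. The inner sum then becomes
\begin{align*}
\frac{a^jq^{j^2}}{(aq;q)_{2j}}\sum_{k=0}^{m}\frac{A^kq^{k^2}}{(q;q)_k(q;q)_{m-k}(Aq;q)_k}.
\end{align*}
Since $(aq;q)_{2j}(Aq;q)_m=(aq;q)_{n+j}$, everything reduces to the single-parameter identity
\begin{align*}
\sum_{k=0}^{m}\qbinom{m}{k}_q\frac{A^kq^{k^2}}{(Aq;q)_k}=\frac{1}{(Aq;q)_m}\qquad(m\geq 0).
\end{align*}

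This identity is the only nontrivial step. I would prove it by induction on $m$; the case $m=0$ is trivial. Multiply by $(Aq;q)_m$ and write $f_m(A)=\sum_k \qbinom{m}{k}_q A^kq^{k^2}(Aq^{k+1};q)_{m-k}$; the claim is then $f_m=1$. Apply the Pascal rule $\qbinom{m}{k}_q=\qbinom{m-1}{k}_q+q^{m-k}\qbinom{m-1}{k-1}_q$. The first piece is $(1-Aq^{m})f_{m-1}(A)$, after factoring $(1-Aq^m)$ out of $(Aq^{k+1};q)_{m-k}$. In the second piece, shift $k\mapsto k+1$ and absorb the resulting $Aq^{2k+1}$ by $A\mapsto Aq^2$ in the summand. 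This should produce $Aq^{m}f_{m-1}(Aq)$ or a similar rescaled copy; the exact shift must be checked, and some rearrangement of which binomial recurrence to use may be needed. With $f_{m-1}\equiv 1$ the two pieces then sum to $(1-Aq^m)+Aq^m=1$.

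If the bookkeeping of the induction becomes awkward, the fallback is to recognize the identity as the $b,c\to\infty$ limit of the $q$-Pfaff--Saalsch\"utz summation (equivalently a $q$-Chu--Vandermonde evaluation), which is the standard route in \cite{Sil,ML}.
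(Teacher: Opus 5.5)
Your proof is correct. The one step you left open works exactly as you predicted, but the substitution is $A\mapsto Aq$, not $A\mapsto Aq^2$. After the shift $k\mapsto k+1$, the $q$-Pascal factor $q^{m-k-1}$ combines with $q^{(k+1)^2}$ to give $q^{m}\,q^{k^2+k}$. Hence the second piece is
\[
Aq^m\sum_k\qbinom{m-1}{k}_q (Aq)^kq^{k^2}(Aq\cdot q^{k+1};q)_{m-1-k}=Aq^mf_{m-1}(Aq),
\]
so that
\[
f_m(A)=(1-Aq^m)f_{m-1}(A)+Aq^mf_{m-1}(Aq).
\]
The induction hypothesis $f_{m-1}\equiv 1$ must hold for all $A$ simultaneously, which is harmless here. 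With that, the argument is complete. Your fallback is also valid: the key identity is the $b\to\infty$ limit of $q$-Chu--Vandermonde.

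The paper gives no proof of this statement. It only quotes it from the literature, and it is used only in Remark \ref{obs}. In the cited sources, the weak Bailey lemma is obtained as the $\rho_1,\rho_2\to\infty$ case of the full Bailey lemma. That proof makes the same interchange of summation you make and evaluates the inner sum with the $q$-Pfaff--Saalsch\"utz summation. Your route is the same reduction, specialized to the limiting case from the start, with the limiting summation proved directly by $q$-Pascal induction. This makes your argument self-contained and more elementary. The literature route depends on an external summation theorem, but in return it gives the two-parameter Bailey lemma, of which this statement is just a corollary.
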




\begin{Thm}[{\cite[Theorem 3.2]{War}, \cite[Theorem 13.1]{ML}}]\label{BCHANGE}
If $((\alpha_n)_{n\geq 0},(\beta_n)_{n\geq 0})$ is a Bailey pair relative to $aq^N$ for $N\geq 0$,
then $((\alpha^{\ast}_n)_{n\geq 0},(\beta_n)_{n\geq 0})$
is a Bailey pair relative to $a$, where
\begin{align*}
  \alpha^{\ast}_n=(1-aq^{2n})(aq;q)_N\sum_{j=0}^{n}(-a)^jq^{2nj-{j+1\choose 2}}
  \qbinom{N}{j}_q\frac{1}{(aq^{2n-j};q)_{N+1}}\alpha_{n-j}.
\end{align*}
\end{Thm}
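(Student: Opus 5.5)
The statement is an identity between two expansions of the same $\beta_n$, so I would prove it directly from the definition of a Bailey pair. The approach: substitute the formula for $\alpha^{\ast}_n$, interchange the order of summation, and reduce the claim to one terminating basic hypergeometric summation for each pair $(n,s)$. By hypothesis,
\[
\beta_n=\sum_{s=0}^{n}\frac{\alpha_s}{(q;q)_{n-s}(aq^{N+1};q)_{n+s}},
\]
and we must show
\[
\beta_n=\sum_{r=0}^{n}\frac{\alpha^{\ast}_r}{(q;q)_{n-r}(aq;q)_{n+r}}.
\]

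First I substitute $\alpha^{\ast}_r$ into the second sum and write $r=s+j$. The right-hand side becomes a double sum over $0\le s\le n$ and $0\le j\le \min(N,n-s)$, in which $\alpha_s$ appears linearly. Since the $\alpha_s$ are arbitrary, it suffices to compare coefficients. Thus for each fixed $n$ and $s$ I need
\[
\sum_{j\ge 0}\frac{(1-aq^{2s+2j})(aq;q)_N(-a)^jq^{2(s+j)j-{j+1\choose 2}}\qbinom{N}{j}_q}{(aq^{2s+j};q)_{N+1}(q;q)_{n-s-j}(aq;q)_{n+s+j}}
=\frac{1}{(q;q)_{n-s}(aq^{N+1};q)_{n+s}}.
\]
The terms with $j>\min(N,n-s)$ vanish automatically.

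Second, I normalize this sum. Put $b=aq^{2s}$ and $m=n-s$, and use the following rewritings:
\begin{itemize}
\item $(aq^{2s+j};q)_{N+1}=(b;q)_{N+1+j}/(b;q)_j$;
\item $(aq;q)_{n+s+j}=(aq;q)_{2s}(bq;q)_{m+j}$;
\item $\qbinom{N}{j}_q=(q^{-N};q)_j(-1)^jq^{Nj-{j\choose 2}}/(q;q)_j$;
\item $1/(q;q)_{m-j}=(q^{-m};q)_j(-1)^jq^{mj-{j\choose 2}}/(q;q)_m$.
\end{itemize}
After these substitutions the sum becomes a prefactor times a terminating very-well-poised series in $j$. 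Its term is
\[
\frac{(b,q\sqrt b,-q\sqrt b,q^{-N},q^{-m};q)_j}{(q,\sqrt b,-\sqrt b,bq^{N+1},bq^{m+1};q)_j}\,z^j
\]
for an explicit power $z=z(b,N,m)$ of $b$ and $q$; I expect $z=bq^{N+m+1}$, up to checking the signs and the powers of $q$.

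Third, I evaluate this series by the terminating very-well-poised ${}_6\phi_5$ summation. Equivalently, I take the limit of the ${}_6\phi_5$ sum in which one of its free parameters tends to infinity, which leaves exactly two terminating parameters $q^{-N}$ and $q^{-m}$. The summation gives
\[
\frac{(bq;q)_{N+m}}{(bq^{N+1};q)_m\,(bq^{m+1};q)_N}
\]
up to the prefactors. Multiplying back by $(aq;q)_N/\bigl((b;q)_{N+1}(aq;q)_{2s}(q;q)_m\bigr)$ and simplifying should give exactly $1/\bigl((q;q)_{n-s}(aq^{N+1};q)_{n+s}\bigr)$.

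The main obstacle is the bookkeeping in the second step. One has to match the factor $(-a)^jq^{2(s+j)j-{j+1\choose 2}}$, together with the signs and $q$-powers produced by the two reflections, to precisely the argument required by the very-well-poised summation. If the powers do not line up, that would indicate an error in the normalization rather than in the method.

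As a fallback, I would first prove the case $N=1$, where the inner sum has only the two terms $j=0,1$ and the check is elementary. I would then try to obtain general $N$ by iterating the base change $aq^{k}\to aq^{k-1}$ and showing that the composite kernel equals the stated $q$-binomial kernel; that last step is itself a $q$-Chu–Vandermonde type identity.
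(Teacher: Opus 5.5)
Your proof is correct. The paper gives no proof of its own here: the statement is quoted from Warnaar and McLaughlin, and it is used only in the heuristic Remark~\ref{obs}, not in the proof of Theorem~\ref{MAINA2oddN3}. Your coefficient comparison therefore replaces a black-box citation with a self-contained argument. Its only cost is one standard summation, the $c\to\infty$ case of the terminating very-well-poised ${}_6\phi_5$. The fallback through $N=1$ and iteration is not needed.

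When you write out the second step, two details need correcting.

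First, the summand is not purely geometric in $j$. Combining $(-a)^jq^{2(s+j)j-\binom{j+1}{2}}$ with the reflection factors $(-1)^jq^{Nj-\binom{j}{2}}$ and $(-1)^jq^{mj-\binom{j}{2}}$ gives $(-1)^jq^{\binom{j}{2}}\bigl(bq^{N+m+1}\bigr)^j$. Your $z=bq^{N+m+1}$ is right, but the extra factor $(-1)^jq^{\binom j2}$ is exactly what the limit $c\to\infty$ of the ${}_6\phi_5$ produces. So the limiting form, not the ${}_6\phi_5$ itself, is the identity to invoke:
\[
\sum_{j\ge 0}\frac{(b,q\sqrt b,-q\sqrt b,q^{-N},q^{-m};q)_j}{(q,\sqrt b,-\sqrt b,bq^{N+1},bq^{m+1};q)_j}(-1)^jq^{\binom{j}{2}}\bigl(bq^{N+m+1}\bigr)^j=\frac{(bq;q)_m}{(bq^{N+1};q)_m}.
\]
This agrees with your predicted value, since $(bq;q)_{N+m}=(bq;q)_m(bq^{m+1};q)_N$.

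Second, the prefactor should be $\frac{(1-b)(aq;q)_N}{(b;q)_{N+1}(aq;q)_{2s}(q;q)_m(bq;q)_m}$, not the one you wrote. The factor $1-b$ comes from $1-bq^{2j}=(1-b)\frac{(q\sqrt b,-q\sqrt b;q)_j}{(\sqrt b,-\sqrt b;q)_j}$. The factor $(bq;q)_m$ comes from $(bq;q)_{m+j}=(bq;q)_m(bq^{m+1};q)_j$. With both corrections, and using $(1-b)/(b;q)_{N+1}=1/(bq;q)_N$, the product becomes
\[
\frac{(aq;q)_N}{(aq;q)_{2s}(q;q)_m(bq;q)_{N+m}}=\frac{(aq;q)_N}{(q;q)_{n-s}(aq;q)_{N+n+s}}=\frac{1}{(q;q)_{n-s}(aq^{N+1};q)_{n+s}},
\]
as required.
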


\begin{Thm}[{\cite[Corollary 4.2]{AAB}, \cite[Theorem 11.4]{ML}}]\label{BLATTICE}
  Let $((\alpha_n)_{n\geq 0},(\beta_n)_{n\geq 0})$ be a Bailey pair relative to $a$.
  For $k\geq 1$ and $0\leq j\leq k$, we have
  \begin{align*}
    {} &{}
    \sum_{J_1\geq \dots\geq J_k\geq 0}
    \frac{q^{J_1^2+\dots+J_k^2-(J_1+\dots+J_j)}a^{J_1+\dots+J_k}}{(q;q)_{J_1-J_2}\dots(q;q)_{J_{k-1}-J_k}}
      \beta_{J_k}\\
      &=
      \frac{1}{(aq;q)_{\infty}}\sum_{t\geq 0}\frac{a^{kt}q^{kt^2-jt}(1-a^{j+1}q^{2t(j+1)})}{(1-aq^{2t})}
      \alpha_{t}.
  \end{align*}
\end{Thm}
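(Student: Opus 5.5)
The plan is to prove Theorem \ref{BLATTICE} by running a Bailey chain: $k-j$ steps relative to $a$, one change of base from $a$ to $a/q$ using Theorem \ref{BCHANGE}, $j-1$ more steps relative to $a/q$, and a final limiting step. The key observation is that $a^{J}q^{J^2-J}=(a/q)^Jq^{J^2}$. So the indices $J_1,\dots,J_j$ carry the Bailey-lemma weight for base $a/q$, while $J_{j+1},\dots,J_k$ carry the weight for base $a$.

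\textbf{Case $j=0$.} Apply Theorem \ref{BLEMMA} $k-1$ times to $(\alpha,\beta)$. This gives a Bailey pair relative to $a$ with $\alpha^{(k-1)}_n=a^{(k-1)n}q^{(k-1)n^2}\alpha_n$, and $\beta^{(k-1)}_{J_2}$ equal to the nested sum over $J_2\geq\dots\geq J_k$. Next apply Theorem \ref{BLEMMA} once more and let $n\to\infty$ in the defining relation $\beta'_n=\sum_r\alpha'_r/((q;q)_{n-r}(aq;q)_{n+r})$. Dominated convergence in the formal power series sense is enough here. The left side of this relation becomes $\frac{1}{(q;q)_\infty}\sum_{J_1}a^{J_1}q^{J_1^2}\beta^{(k-1)}_{J_1}$, and the right side becomes $\frac{1}{(q;q)_\infty(aq;q)_\infty}\sum_t a^{kt}q^{kt^2}\alpha_t$. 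Cancelling $(q;q)_\infty$ gives the claim, since for $j=0$ the factor $(1-aq^{2t})/(1-aq^{2t})$ equals $1$.

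\textbf{General $j\geq 1$.} The steps are as follows.
\begin{enumerate}
\item Perform $k-j$ steps of Theorem \ref{BLEMMA} relative to $a$. This gives a pair $(A_n,B_n)$ with $A_n=a^{(k-j)n}q^{(k-j)n^2}\alpha_n$.
\item Regard this as a pair relative to $(a/q)q^1$ and apply Theorem \ref{BCHANGE} with $N=1$ and base $a/q$. This yields a pair $(A^\ast_n,B_n)$ relative to $a/q$, where
\begin{align*}
A^\ast_n=(1-aq^{2n-1})(1-a)\Bigl(\frac{A_n}{(1-aq^{2n-1})(1-aq^{2n})}-\frac{aq^{2n-2}A_{n-1}}{(1-aq^{2n-2})(1-aq^{2n-1})}\Bigr),
\end{align*}
up to the exact power of $q$ coming from $q^{2nj-\binom{j+1}{2}}$. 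I will recompute that power carefully.
\item Apply Theorem \ref{BLEMMA} with $a$ replaced by $a/q$ a further $j-1$ times. Then take the same limit $n\to\infty$ as in the case $j=0$, still with base $a/q$.
\end{enumerate}
The left side is then exactly the nested sum in the theorem, because $B_{J_{j+1}}$ is the inner sum over $J_{j+1}\geq\dots\geq J_k$. The right side becomes
\begin{align*}
\frac{1}{(a;q)_\infty}\sum_t (a/q)^{jt}q^{jt^2}A^\ast_t .
\end{align*}
The factor $(1-a)$ in $A^\ast_t$ turns $1/(a;q)_\infty$ into $1/(aq;q)_\infty$.

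\textbf{Main obstacle: the final bookkeeping.} Split $A^\ast_t$ into its two terms and shift $t\to t+1$ in the term involving $A_{t-1}$. Both pieces then become multiples of $a^{kt}q^{kt^2}\alpha_t$, with the same prefactor $a^{kt}q^{kt^2-jt}\alpha_t/(1-aq^{2t})$. The first term contributes $1$. The shifted second term should contribute $-a^{j+1}q^{2t(j+1)}$, coming from the product $(a/q)^{j}q^{j(2t+1)}\cdot aq^{2t}$ after the $A$-exponents are matched. Adding the two gives the factor $(1-a^{j+1}q^{2t(j+1)})/(1-aq^{2t})$.

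Checking that the powers of $q$ and $a$ line up exactly is the delicate point. If Theorem \ref{BCHANGE} produces a slightly different normalization, I would instead derive the $N=1$ base change directly from the defining relation, using $\frac{1}{(aq;q)_{n+r}}=\frac{1}{(a;q)_{n+r+1}}\cdot(1-a)$ together with a partial-fraction split. Finally, I would sanity-check the resulting formula at $k=j=1$ with the unit Bailey pair of Example \ref{UBP}, where it should reduce to a known identity.
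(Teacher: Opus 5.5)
Your proposal is correct, and the bookkeeping you flag as delicate closes exactly. Your displayed $A^\ast_n$ is precisely Theorem~\ref{BCHANGE} with $N=1$ and base $a/q$; the $q$-power is $(-a/q)q^{2n-1}=-aq^{2n-2}$. In other words $A^\ast_n=(1-a)\bigl(\frac{A_n}{1-aq^{2n}}-\frac{aq^{2n-2}A_{n-1}}{1-aq^{2n-2}}\bigr)$, so the fallback derivation is unnecessary. After $t\mapsto t+1$ the second piece is indeed $-a^{j+1}q^{2t(j+1)}$ times the first. One small technical point concerns $a=q$, which is the case $i=0$ in \S\ref{mainproof}: the intermediate base is then $1$, and Theorem~\ref{BCHANGE} as written has a removable $0/0$ at $n=0$. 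Either use the cancelled form above, or work with generic $a$ and specialize.

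The paper itself does not prove Theorem~\ref{BLATTICE}. It quotes \cite{AAB,ML}, and it states that Theorems~\ref{BLEMMA} and~\ref{BCHANGE} are used only in Remark~\ref{obs}. Agarwal--Andrews--Bressoud obtain the lattice from a single lemma. That lemma sends a Bailey pair relative to $a$ to one relative to $a/q$, while simultaneously performing a Bailey-lemma step with parameters $\rho_1,\rho_2$. Letting $\rho_1,\rho_2\to\infty$, it becomes exactly your $N=1$ base change followed by one application of Theorem~\ref{BLEMMA} at base $a/q$.

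So you follow the same path through the lattice, but in factored form. This has two benefits. It makes Theorem~\ref{BLATTICE} a consequence of the two preliminaries the paper already states. It also shows the lattice to be the $N=1$ instance of the chain-plus-base-change mechanism that Remark~\ref{obs} uses, with general $N$, for Conjecture~\ref{MAINA2oddConj}. The packaged AAB lemma instead keeps $\rho_1,\rho_2$ at every step, and so gives the full parametrized lattice, of which the stated identity is the limiting case.
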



\subsection{The quintuple product}
Recall \eqref{WQ} and Watson's quintuple product identity (see ~\cite[\S1.7.3]{Sil})
\begin{align*}
  Q(w,x)
  &= f(-wx^3,-w^2x^{-3})+xf(-wx^{-3},-w^2x^3)\\
  &= \sum_{\ell\in\mathbb{Z}}(-1)^{\ell}w^{\ell(3\ell-1)/2}(x^{3\ell}+x^{-3\ell+1})
\end{align*}
in terms of Jacobi's triple product identity and
Ramanujan's theta (see ~\cite[\S1.7.2]{Sil})
\begin{align*}
  f(c,d)=(-c,-d,cd;cd)_{\infty}=\sum_{n\in\mathbb{Z}}c^{n(n+1)/2}d^{n(n-1)/2}.
\end{align*}

\begin{Prop}\label{QUIN}
  For $\delta=\pm1$, the quintuple product
   $Q(V^6q^{2\delta},UVq^{\delta})$ is equal to
  \begin{align*}
    \sum_{i,t\geq 0}\qbinom{t+i}{t}_{q^2}
    (-1)^t q^{\delta(i+t^2+t+(1-\delta)ti)}
    V^{(i+2t)^2}U^{(i+2t)}(1-(V^2/U)^{2(2t+i+1)}).
  \end{align*}
\end{Prop}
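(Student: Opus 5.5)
The plan is to expand both sides as Laurent series in $U,V$ and compare coefficients. First I would substitute $w=V^6q^{2\delta}$ and $x=UVq^{\delta}$ into the series form of Watson's quintuple product identity,
\begin{align*}
Q(w,x)=\sum_{\ell\in\mathbb{Z}}(-1)^{\ell}w^{\ell(3\ell-1)/2}(x^{3\ell}+x^{-3\ell+1}).
\end{align*}
In the term $x^{3\ell}$ the power of $V$ is $3\ell(3\ell-1)+3\ell=(3\ell)^2$ and the power of $U$ is $3\ell$. In the term $x^{1-3\ell}$ the power of $V$ is $(3\ell-1)^2$ and the power of $U$ is $1-3\ell$. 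So the left hand side is a sum of monomials $V^{m^2}U^{\pm m}$ with explicit coefficients $\pm q^{(\cdots)}$:
\begin{itemize}
\item $V^{m^2}U^{m}$ with $m\ge 0$ occurs exactly when $m\equiv 0,1 \pmod 3$;
\item $V^{m^2}U^{-m}$ with $m>0$ occurs exactly when $m\equiv 0,2 \pmod 3$.
\end{itemize}

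Next I would reorganize the right hand side. Put $N=i+2t\ge 0$. Then
\begin{align*}
V^{N^2}U^{N}\bigl(1-(V^2/U)^{2(N+1)}\bigr)=V^{N^2}U^{N}-V^{(N+2)^2}U^{-(N+2)},
\end{align*}
so the right hand side equals $\sum_{N\ge0}c_N\bigl(V^{N^2}U^N-V^{(N+2)^2}U^{-(N+2)}\bigr)$. Here
\begin{align*}
c_N=\sum_{0\le t\le N/2}(-1)^t\qbinom{N-t}{t}_{q^2}q^{\delta(N-2t+t^2+t+(1-\delta)t(N-2t))}
\end{align*}
is a finite polynomial (Laurent when $\delta=-1$) in $q$. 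The monomials $V^{N^2}U^N$ and $V^{M^2}U^{-M}$ (with $M=N+2\ge2$) are all distinct. The proposition is therefore equivalent to two families of statements:
\begin{itemize}
\item $c_N$ equals the left-hand coefficient of $V^{N^2}U^N$, namely $(-1)^\ell q^{(\cdot)}$ if $N\equiv0,1\pmod 3$ and $0$ if $N\equiv 2\pmod 3$;
\item $-c_{M-2}$ equals the left-hand coefficient of $V^{M^2}U^{-M}$ for $M\ge 2$.
\end{itemize}
I must also check that the left side has no terms $V^{1}U^{-1}$ (i.e.\ $m=1$ with negative sign). This holds since $1\not\equiv 0,2\pmod 3$. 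The second family is a statement about $c_{M-2}$ with $M-2\equiv 1,0,2 \pmod 3$. So both reduce to a single closed evaluation of $c_N$ depending on $N\bmod 3$, together with a bookkeeping check that the $q$-exponents from the two sources agree. The exponents $\delta\cdot 2\cdot\ell(3\ell-1)/2+\delta\cdot(\text{power of }x)$ are quadratic in $\ell$ and are compared with $N=3\ell$ or $3\ell-1$, etc.

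The main step is the evaluation of $c_N$. For $\delta=1$ the factor $q^{N}$ comes out and, with $p=q^2$, one needs
\begin{align*}
\sum_t(-1)^tp^{\binom t2}\qbinom{N-t}{t}_p .
\end{align*}
This is a classical ``$q$-Fibonacci'' sum which is $0$ or a signed power of $p$ according to $N\bmod 3$. I would prove it by applying the Pascal rule $\qbinom{N-t}{t}_p=\qbinom{N-1-t}{t}_p+p^{N-2t}\qbinom{N-1-t}{t-1}_p$ (choosing the version of the rule that makes the powers telescope). This yields a short recurrence expressing $c_N$ through $c_{N-1}$ and $c_{N-2}$ with monomial coefficients; iterating it gives period three up to explicit powers of $q$. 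Alternatively one can view it as a terminating instance of $q$-Chu--Vandermonde. For $\delta=-1$ the exponent is $-(N-2t+t^2+t+2t(N-2t))$; inverting $q\mapsto q^{-1}$ and using $\qbinom{n}{k}_{q^{-2}}=q^{-2k(n-k)}\qbinom{n}{k}_{q^2}$ should transform it into the same type of sum, possibly with $p^{\binom{t+1}{2}}$ in place of $p^{\binom t2}$, handled identically.

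I expect the main obstacle to be getting this finite identity with exactly the right powers of $q$ in each residue class and for both signs of $\delta$. I would first check small $N$ by hand to pin down the closed form, then prove it by the recurrence and induction. After that, matching exponents with the quintuple-product side is routine algebra.
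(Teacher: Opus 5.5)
Your reduction is the paper's: the same expansion of $Q(V^6q^{2\delta},UVq^{\delta})$, the same matching of the monomials $V^{e^2}U^{e}$, and the same passage from $\delta=1$ to $\delta=-1$. Your handling of $e\ge 0$, $e=-1$ and $e\le -2$ is correct, and the $e\le-2$ family does reduce to the $e\ge 0$ one, as the paper also notes. In fact $\qbinom{N-t}{t}_{q^2}=q^{2t(N-2t)}\qbinom{N-t}{t}_{q^{-2}}$ turns the $\delta=-1$ coefficient into exactly the $\delta=1$ coefficient with $q$ replaced by $q^{-1}$, so no $p^{\binom{t+1}2}$ variant arises there. The only real difference is in the $\delta=1$ evaluation. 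The paper does not prove it: it cites Warnaar and gives a WZ certificate in Remark~\ref{autopr}. You propose an elementary proof instead, and that proof does not work as you describe it.

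Put $A_N=\sum_t(-1)^tp^{\binom t2}\qbinom{N-t}{t}_p$, so that $c_N=q^NA_N$ for $\delta=1$. A single application of Pascal's rule never gives a recurrence among $A_N$, $A_{N-1}$, $A_{N-2}$. The version you wrote gives $A_N=A_{N-1}-p^{N-2}\sum_s(-1)^sp^{\binom s2-s}\qbinom{N-2-s}{s}_p$. The other version, $\qbinom{n}{k}_p=p^k\qbinom{n-1}{k}_p+\qbinom{n-1}{k-1}_p$, gives $A_N=B_{N-1}-B_{N-2}$, where $B_N=\sum_t(-1)^tp^{\binom{t+1}2}\qbinom{N-t}{t}_p$ is a companion sum. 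In both cases the shifted exponent of $p$ keeps you from returning to $A$.

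The missing step is to close this system. Applying your version of Pascal's rule to $B_N$ gives $B_N=B_{N-1}-p^{N-1}A_{N-2}$. Hence $A_N=B_{N-1}-B_{N-2}=-p^{N-2}A_{N-3}$ for $N\ge 3$. Together with $A_0=A_1=1$ and $A_2=0$, this yields $A_{3M}=(-1)^Mp^{M(3M-1)/2}$, $A_{3M+1}=(-1)^Mp^{M(3M+1)/2}$ and $A_{3M+2}=0$, which are exactly the values of $c_N$ you need.

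Drop the $q$-Chu--Vandermonde remark: already at $q=1$ the sum is ${}_2F_1(-N/2,(1-N)/2;-N;4)$, which is not a Vandermonde evaluation. With this repair, your argument becomes a self-contained replacement for the paper's citation and WZ step.
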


\begin{proof}
Note $V^{(i+2t)^2}U^{(i+2t)}(V^2/U)^{2(2t+i+1)}=V^{(i+2t+2)^2}U^{-(i+2t+2)}$ and
\begin{align*}
  Q(V^6q^{2\delta},UVq^{\delta})
  =
  \sum_{\ell\in\mathbb{Z}}(-1)^{\ell}(
  V^{9\ell^2}U^{3\ell}q^{\delta\ell(3\ell+2)}
  +V^{(1-3\ell)^2}U^{1-3\ell}q^{\delta(1-3\ell)(1-\ell)}).
\end{align*}

Thus, for $m\geq 0$ and $m'<0$, it is enough to show 
\begin{align*}
  \sum_{\substack{i,t\geq 0 \\ i+2t=m}}\qbinom{t+i}{t}_{q^2}
  (-1)^t q^{\delta(i+t^2+t+(1-\delta)ti)}
  &=
  \begin{cases}
    (-1)^M q^{\delta M(3M+2)} & \textrm{if $m=3M$}, \\
    (-1)^M q^{\delta(1+3M)(1+M)} & \textrm{if $m=3M+1$}, \\
    0 & \textrm{if $m=3M+2$},
  \end{cases}\\  
  \sum_{\substack{i,t\geq 0 \\ i+2t+2=-m'}}\qbinom{t+i}{t}_{q^2}
  (-1)^{t+1} q^{\delta(i+t^2+t+(1-\delta)ti)}
  &=
  \begin{cases}
    (-1)^{M'} q^{\delta M'(3M'-2)} & \textrm{if $m'=-3M'$}, \\
    (-1)^{M'} q^{\delta (1-3M')(1-M')} & \textrm{if $m'=-3M'+1$}, \\
    0 & \textrm{if $m'=-3M'+2$}.
  \end{cases}  
\end{align*}
Because the latter is easily obtained by the former, we show the former.
The case $\delta=1$ follows easily from ~\cite[(16)]{War2} (see also Remark \ref{autopr} for an automatic proof).
The case $\delta=-1$ follows from the case $\delta=1$ by virtue of $\qbinom{t+i}{t}_{q^{-2}}=q^{-2ti}\qbinom{t+i}{t}_{q^{2}}$.
\end{proof}


\section{Proof of Theorem \ref{MAINA2oddN3}}\label{mainproof}
Apply Theorem \ref{BLATTICE} to the unit Bailey pair (Example \ref{UBP}), and set
\begin{align*}
A=(\textrm{the left hand side of Theorem \ref{BLATTICE}})|_{a=q^{i+1},k=2n+1-\varepsilon,j=2n+1-\varepsilon-b,(\beta_n)_{n\geq 0}=(\BETAUNIT_n)_{n\geq 0}}
\end{align*}
so that
the infinite sum in Theorem \ref{MAINA2oddN3} is equal to
\begin{align*}
  \sum_{i\geq 0}(A|_{q=q^2})\frac{q^{(n+1-\varepsilon)i^2+(b-1+\varepsilon)i}}{(q^2;q^2)_i}.
\end{align*}


Let $a=q^{i+1},k=2n+1-\varepsilon,j=2n+1-\varepsilon-b, (\beta_n)_{n\geq 0}=(\BETAUNIT_n)_{n\geq 0}$ (and $(\alpha_n)_{n\geq 0}=(\ALPHAUNIT_n)_{n\geq 0}$) as above.
By Theorem \ref{BLATTICE}, we have
\begin{align*}
  A=\frac{1}{(q^{i+2};q)_{\infty}}\sum_{t\geq 0}
  q^{t(tk+(i+1)k-j)}\frac{1-q^{(j+1)(2t+i+1)}}{1-q^{2t+i+1}}
  (-1)^tq^{{t\choose 2}}\frac{(1-q^{2t+i+1})}{1-q^{i+1}}\frac{(q^{i+1};q)_t}{(q;q)_t}.
\end{align*}

Thus, the infinite sum in Theorem \ref{MAINA2oddN3} is equal to
$B/(q^2;q^2)_{\infty}$, where
\begin{align*}
  B=
  \sum_{i,t\geq 0}\qbinom{t+i}{t}_{q^2}
  (-1)^t q^{(n+1-\varepsilon)i^2+(b-1+\varepsilon)i+t(2(tk+(i+1)k-j)+t-1)}
  (1-q^{2(j+1)(2t+i+1)}).
\end{align*}
Let $T=q^{n+1-\varepsilon}$ and $S=q^{b-\varepsilon}$ so that $q^{j+1}=T^2/S$.
It is easy to see
\begin{align*}
B=\sum_{i,t\geq 0}\qbinom{t+i}{t}_{q^2}
(-1)^t q^{(2\varepsilon-1)i+t(t-1)+2(\varepsilon-1)t(t+i+1)+2t}
T^{(i+2t)^2}S^{(i+2t)}(1-(T^2/S)^{2(2t+i+1)}).
\end{align*}
Let $\delta=2\varepsilon-1$ for $\varepsilon=0,1$. By Proposition \ref{QUIN}, we have
$B = Q(T^6q^{4\varepsilon-2},STq^{2\varepsilon-1}) = Q(q^{6n+(-1)^{\varepsilon}+3},q^{n+b})$, 
which proves Theorem \ref{MAINA2oddN3}.

\begin{Rem}\label{obs}
A similar proof strategy may be applicable to Conjecture \ref{MAINA2oddConj}. 
Here is a sketch. Let $N\geq 0$ and take
the unit Bailey pair (Example \ref{UBP})
$((\ALPHAUNIT_n)_{n\geq 0},(\BETAUNIT_n)_{n\geq 0})$
relative to $aq^{N}$.
Applying the weak Bailey Lemma (Theorem \ref{BLEMMA}) twice, we get a Bailey pair
$((\alpha_n)_{n\geq 0},(\beta_n)_{n\geq 0})$
relative to $aq^N$, where
\begin{align*}
\alpha_n=a^{2n}q^{2n^2+2Nn}\ALPHAUNIT_n(aq^N,q),\quad
\beta_n=\sum_{r=0}^{n}\frac{a^rq^{r^2+Nr}}{(q;q)_{n-r}(q;q)_r}.
\end{align*}

By a base change (Theorem \ref{BCHANGE}), we get a Bailey pair
$((\alpha^{\ast}_n)_{n\geq 0},(\beta_n)_{n\geq 0})$ relative to $a$.
Then, the left hand side of Theorem \ref{BLATTICE} is nothing but 
\begin{align*}
\sum_{J_1\geq \dots\geq J_k\geq J_{k+1}\geq 0}
  \frac{q^{J_1^2+\dots+J_k^2+J_{k+1}^2+NJ_{k+1}-(J_1+\dots+J_j)}a^{J_1+\dots+J_k+J_{k+1}}}{(q;q)_{J_1-J_2}\dots(q;q)_{J_{k-1}-J_k}(q;q)_{J_{k}-J_{k+1}}(q;q)_{J_{k+1}}}.
\end{align*}

Based on this observation, 
by an argument similar to that in the proof of Theorem \ref{MAINA2oddN3} (omitting the tedious calculation),
Conjecture \ref{MAINA2oddConj} for $1\leq b\leq 2n$ is reduced to showing
\begin{align*}
  {} &\sum_{\substack{v,t,u,p,r\geq 0 \\ v+3t+3u+3p+2r=m}}
  (-1)^{p+r}
  q^{v+t+2u+r(3v+3p+3t+3u+1)+p(p+1)/2+r(3r-1)/2}\\
  &\quad\quad (1-q^{2r+2v+2p+3t+3u+1})\frac{(q;q)_{2p+2r+v+3t+3u}}
       {(q;q)_{3p+2r+2v+3t+3u+1}}\frac{(q;q)_{r+2v+2p+3t+3u}}{(q^3;q^3)_t(q^3;q^3)_u(q;q)_{r}(q;q)_p(q;q)_{v}}\\
       &=
       \begin{cases}
         (-1)^Mq^{3M(3M+1)/2} & \textrm{if $m=3M$},\\
         (-1)^Mq^{(3M+2)(3M+1)/2} & \textrm{if $m=3M+1$},\\
         0 & \textrm{if $m=3M+2$}.
       \end{cases}
\end{align*}
for $m\geq 0$. Because the summand is $q$-proper hypergeometric,
$q$-Sister Celine technique ~\cite[\S5]{WZ} should work (i.e., provide the desired recurrence relation) in principle (of course, we need to be rigorous on
the subtle details, such as confirmation of the admissibility~\cite[\S5.2]{WZ}).
We hope this observation will be useful to future works.
\end{Rem}

\begin{Rem}\label{autopr}
As a proof of concept of the reduction in Remark \ref{obs}, we present an automatic proof of
the identity in the proof of Proposition \ref{QUIN}. Let
\begin{align*}
  F_k(M,t) =
  \qbinom{3M+k-t}{t}_{q^2}(-1)^tq^{(3M+k-2t)+t^2+t})/((-1)^M q^{(3M+2-k)(M+k)})
\end{align*}
for $M\geq 0$ and $k=0,1,2$ in virtue of the Wilf trick.
Our aim is to show
\begin{align}
\sum_{t\in\mathbb{Z}}F_{0}(M,t)=1=
\sum_{t\in\mathbb{Z}}F_{1}(M,t),\quad
\sum_{t\in\mathbb{Z}}F_{2}(M,t)=0.
\label{saigoQUIN}
\end{align}

A straightforward calculation shows that
\begin{align*}
R_{k}(M,t)=\frac{q^{4t-(6M+2+2k)}(1-q^{2t})(-q^{6t}+q^{12M+10+4k})(q^{6M+2+2k}-q^{2t})}{(-q^{6M+6+2k}+q^{4t})(-q^{6M+4+2k}+q^{4t})(-q^{6M+2+2k}+q^{4t})}
\end{align*}
is a Wilf-Zeilberger certificate~\cite{WZ2} (see also ~\cite[\S1.4]{Sil} and ~\cite[\S9.2]{Cha}), i.e., 
we have
\begin{align*}
F_{k}(M+1,t)-F_{k}(M,t)=G_{k}(M,t+1)-G_{k}(M,t),
\end{align*}
for $G_{k}(M,t)=F_{k}(M,t)R_{k}(M,t)$.
As usual, it implies that $\sum_{t\in\mathbb{Z}}F_{k}(M,t)$ depends only on $M$.
Thus, we have \eqref{saigoQUIN} for $M\geq 0$
by verifying \eqref{saigoQUIN} for $M=0$.
\end{Rem}

\section{Relations to partition identities}\label{classi}

Let $\PAR(n)$ (resp. $\PAR$) denote the set of partitions of $n$ (resp. partitions).
Recall that two subsets $\PC$ and $\PD$ of $\PAR$ are
partition theoretically equivalent~\cite[Definition 3]{An0} (abbreviated to $\PC\PT\PD$)
if we have $|\PC\cap\PAR(n)|=|\PD\cap\PAR(n)|$ for $n\geq 0$.

We briefly review the relevant partition theorems.
As usual, for a partition $\lambda$ and positive integer $j$, we denote by
$m_j(\lambda)$ the number of parts of $\lambda$ that are equal to $j$.
For $i=1,2$, we denote by $\GG_i$ (resp. $\RR_i$) the set of
G\"ollnitz-Gordon~\cite[\S2.4.4]{Sil} (resp. Rogers-Ramanujan) partitions, namely,
\begin{align*}
  \RR_i &= \{\lambda\in\PAR\mid\textrm{$\lambda_k-\lambda_{k+1}\geq 2$ for $1\leq k<\ell(\lambda)$, and $m_1(\lambda)\leq 2-i$}\},\\
  \GG_1 &= \{\lambda\in\RR_1\mid\textrm{$\lambda_k-\lambda_{k+1}=2$ for $1\leq k<\ell(\lambda)$ implies $\lambda_k$ is odd}\},\\
  \GG_2 &= \{\lambda\in\GG_1\mid m_1(\lambda)=m_2(\lambda)=0\}.
\end{align*}

The 
G\"ollnitz-Gordon and Rogers-Ramanujan
partition theorems are stated as
\begin{align*}
  \GG_i\PT T^{(8)}_{2i-1,4,9-2i},\quad
  \RR_i\PT T^{(5)}_{i,5-i},
\end{align*}
where $T^{(N)}_{a,\dots,b}=\{\lambda\in\PAR\mid\textrm{$\lambda_k\equiv a,\dots,b\pmod{N}$ for $1\leq k\leq\ell(\lambda)$}\}$.

Moreover, for $j=1,2,3$, let $\HH_j$ (resp. $\NAN_j$)
be the set of
Kanade-Russell~\cite[3.1.$j$]{KR2} (resp. Nandi~\cite[Conjecture 5.$(4+j)$]{Sil}) partitions (see also ~\cite{Nan,TT}).
We do not duplicate the definitions because, unlike $\GG_i$ and $\RR_i$, their definitions are not stated in a single line.
We refer to the following partition theorem
\begin{align}
  \HH_1\PT T^{(12)}_{1,4,6,8,11},\quad
  \sum_{\lambda\in\HH_2}q^{|\lambda|}
  =\frac{(q^6;q^{12})_{\infty}}{(q^2,q^3,q^4,q^8,q^9,q^{10};q^{12})_{\infty}},\quad
  \HH_3\PT T^{(12)}_{4,5,6,7,8}.
\label{KR12}
\end{align}
as ``Kanade-Russell conjecture modulo 12''
although it is a theorem due to Bringmann et al. and Rosengren~\cite{BJM,Ros}.
By ~\cite{TT}, we also have
\begin{align*}
  \NAN_1\PT T^{(14)}_{2,3,4,10,11,12},\quad
  \NAN_2\PT T^{(14)}_{1,4,6,8,10,13},\quad
  \NAN_3\PT T^{(14)}_{2,5,6,8,9,12}.
\end{align*}

\begin{Rem}\label{interp}
For an affine Dynkin diagram $A^{(2)}_{2\ell-1}$, where $\ell\geq 3$,
 and a level 2 dominant integral weight $\LAMBDA{b}=(\delta_{b,0}+\delta_{b,1})\Lambda_0+\Lambda_{b}$, where $0\leq b\leq \ell$, we consider
 the standard module $V(\LAMBDA{b})$ with a highest weight vector
 $w_{\LAMBDA{b}}$. 
 Let $\ZZ_{k}(\beta)$, where $k\in\mathbb{Z}$, be a Lepowsky-Wilson $Z$-operator~\cite{LW3} (see a review in ~\cite[\S2]{Tsu2})  associated with a root $\beta$ of $A_{2\ell-1}$, whose simple roots $\{\alpha_1,\dots,\alpha_{2\ell-1}\}$ are taken as usual (see ~\cite{Ito}).

 It is known by
  ~\cite[Theorem 1.2, Theorem 1.3, Theorem 1.4]{Ito} that 
  \begin{enumerate}
  \item $\{\ZZ_{-\lambda_1}\cdots\ZZ_{-\lambda_{\ell}}w_{\Lambda'_{2i-1}}\mid \lambda\in\GG_i\}$ spans $V(\Lambda'_{2i-1})$ for $\ell=3$ and $i=1,2$,
  \item $\{\ZZ_{-\lambda_1}\cdots\ZZ_{-\lambda_{\ell}}w_{\Lambda'_{2i-1}}\mid \lambda\in\RR_i\}$ spans $V(\Lambda'_{2i-1})$ for $\ell=4$ and $i=1,2$,
  \item $\{\ZZ_{-\lambda_1}\cdots\ZZ_{-\lambda_{\ell}}w_{\Lambda'_{2j-1}}\mid \lambda\in\HH_j\}$ spans $V(\Lambda'_{2j-1})$ for $\ell=5$ and $j=1,2,3$,
  \end{enumerate}
  where $\ZZ_k=\ZZ_k(\alpha_1)$ (resp. $\ZZ_k=\ZZ_k(\alpha_{\ell})$) if $k$ is even (resp. odd). Note that, by the partition theorems, we may replace the word ``spans'' with ``is a basis of''.

  It is reasonable to expect the same form of vertex operator interpretation  of Nandi's partitions $\NAN_j$, whose definition originates from $A^{(2)}_2$ level 4~\cite{Nan},  via $A^{(2)}_{11}$ level 2 (see also ~\cite[Theorem 1.1]{Ito} and ~\cite[\S1.1]{KR2}). 
\end{Rem}


\begin{Prop}\label{computerproof}
  For $a=1,2$, we have
\begin{align*}
\sum_{\lambda\in\GG_a}x^{\ell(\lambda)}q^{|\lambda|}
&=
\sum_{s,t\geq 0}
\frac{q^{Q^{(1)}_1(s,t)+\delta_{a,2}2(s+t)}}{(q^2;q^2)_s(q^2;q^2)_t}x^{s+t},\\
\sum_{\lambda\in\RR_a}x^{\ell(\lambda)}q^{|\lambda|}
&=
\sum_{s,t,u\geq 0}\frac{q^{Q^{(0)}_{1}(s,t,u)+(2a-3)s+(2a-2)u}}{(q^2;q^2)_s(q^2;q^2)_t(q^2;q^2)_u}x^{s+t+2u},\\
\sum_{\lambda\in \HH_{a+1}}x^{\ell(\lambda)}q^{|\lambda|}
&=
\sum_{s,t,u,v,w\geq 0}\frac{q^{Q_1(s,t,u,v,w)+2as+(6a-4)t+(6a-2)u+(2a-2)v+(4a-4)w}}{(q^2;q^2)_s(q^6;q^6)_t(q^6;q^6)_u(q^2;q^2)_v(q^2;q^2)_w}x^{s+3t+3u+v+2w},\\
\sum_{\lambda\in \NAN_{2a-1}}x^{\ell(\lambda)}q^{|\lambda|}
&=
\sum_{s,t,u,v\geq 0}\frac{q^{Q^{(1)}_2(s,t,u,v)+(2a-1)s+(2a-2)u+(4a-4)v}}{(q^2;q^2)_s(q^2;q^2)_t(q^2;q^2)_u(q^2;q^2)_v}x^{s+t+2u+2v}.
\end{align*}
\end{Prop}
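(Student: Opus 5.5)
The plan is to use the standard automatic approach of \S\ref{classi}, namely the regular-language / linked partition ideal method, carried out in the same way for each of the four families. For each set $\PC\in\{\GG_a,\RR_a,\HH_{a+1},\NAN_{2a-1}\}$, the partition condition is a difference condition with bounded span. Hence $\PC$ is described by a finite automaton that reads the multiplicity sequence $(m_1(\lambda),m_2(\lambda),\dots)$. This gives a system of $q$-difference equations in $x$ for the two-variable generating functions
\[
f_{\PC}(x)=\sum_{\lambda\in\PC}x^{\ell(\lambda)}q^{|\lambda|}.
\]
Typically these are functional equations of the form $f_s(x)=\sum_{s'}c_{s,s'}(x,q)\,f_{s'}(xq^{d})$, where $s$ runs over the finitely many states, $d$ is the modulus ($d=1$ here, or $2$ after grouping parts by parity), and the $c_{s,s'}$ are explicit monomials. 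Together with the normalization $f_s(0)=1$, such a system determines the $f_s$ uniquely as formal power series in $x$.

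The second step is to show that the right-hand sides satisfy the same system. For each state I will write down a candidate multisum. This is the given Andrews-Gordon type series for the initial states, and suitable shifts in the linear terms for the auxiliary states. Expanding in $x$, the coefficient of $x^N$ is a finite sum of $q$-proper hypergeometric terms. I will then verify the recurrences termwise, using the elementary relation
\[
\frac{1}{(q^2;q^2)_s}-\frac{q^{2s}}{(q^2;q^2)_s}=\frac{1}{(q^2;q^2)_{s-1}}
\]
to shift summation indices. The quadratic forms $Q^{(\varepsilon)}_n$ and $Q_1$ have the structure $\sum J_k^2+\dots$, so replacing $x$ by $xq^2$ adds exactly the linear terms $2(s+t)$, $2s+2u$, and so on. This is the mechanism that makes the shifted sums match the automaton equations. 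Concretely, I will define the generating function of each state as an explicit series $F_s(x)$ with modified linear exponents. Then, for each automaton transition, I will verify $F_s(x)-\sum_{s'}c_{s,s'}F_{s'}(xq^{d})=0$ by collecting terms according to which summation variable is decremented. Where a termwise identity is not visible, I will treat the coefficient of $x^N q^{\ast}$ as a $q$-holonomic sum and certify it by a $q$-Zeilberger or Sister Celine certificate, in the spirit of Remark \ref{autopr}.

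The third step concludes. Uniqueness of solutions of the system with $f_s(0)=1$ gives equality of $f_{\PC}$ with the stated series for each $a$. The cases $\GG_a$ and $\RR_a$ involve only two or three states and should be direct checks. For $\RR_a$ the exponent $x^{s+t+2u}$ reflects a "pair" variable $u$ that counts configurations contributing two parts.

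The main obstacle is $\HH_{a+1}$ and $\NAN_{2a-1}$. Their automata have many states, so the auxiliary series for non-initial states must be guessed. In the $\HH$ case the factors $(q^6;q^6)$ and $x^{3t+3u}$ mean that a single summation index accounts for three parts, so the transitions are not simple one-step index shifts. To handle this I will first compute the minimal automaton and solve the system numerically as power series in $x$ and $q$ to high order. I will then fit the auxiliary states' series by ansatz, keeping the same quadratic form and varying only the linear terms and the $x$-weights. Once the ansatz is fitted, I will prove the resulting finite list of multisum recurrences by creative telescoping.
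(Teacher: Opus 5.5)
Your overall frame (automaton on the partition side, uniqueness for the resulting $q$-difference system, then showing the multisum satisfies it) matches the paper's. The sum-side step, however, is genuinely different.

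The paper never attaches multisums to the individual automaton states. It reduces each side separately to a single scalar linear $q$-difference equation in $x$, and then compares the two equations:
\begin{itemize}
\item the partition side is handled automatically via \cite{An0,TT};
\item the sum side is handled by the algorithm of \cite[\S7.2]{Tsu3}, which, as far as one can tell from the paper's one-line description, works with a finite family of auxiliary shifted sums parametrized by $(S,D)$ and extracts a relation for the original sum by linear algebra.
\end{itemize}
You instead verify the automaton's full system state by state, using a guessed multisum for each state.

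When such guesses exist, your route yields more. You get explicit sum sides for every state, and you only need to certify short local relations rather than a possibly high-order scalar equation. Your uniqueness argument with $f_s(0)=1$ is sound.

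The cost is that nothing guarantees every state function is a single sum with the same quadratic form and only shifted linear terms and $x$-weights. In the larger automata for $\HH_{a+1}$ and $\NAN_{2a-1}$, some states may only be $\mathbb{Z}[x,q]$-linear combinations of such sums. Widening your ansatz to allow that essentially recreates the family of shifted sums that the paper eliminates over. That is why the paper's route needs no guessing.

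There is one slip in your "mechanism" claim. For $\RR$, the map $x\mapsto xq^2$ multiplies the summand by $q^{2s+2t+4u}$, not by $q^{2s+2u}$. The cases $a=1,2$ there are related instead by $x\mapsto xq$ (adding $1$ to every part maps $\RR_1$ onto $\RR_2$), combined with the $s\leftrightarrow t$ symmetry of $Q^{(0)}_1$. For $\NAN_{2a-1}$, the linear terms for $a=1,2$ differ by $2s+2u+4v$, which is not a multiple of the $x$-weight $s+t+2u+2v$, so no shift of $x$ produces it. Those two cases must therefore enter your system as separate states.
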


\begin{proof}
As usual, one can complete the proof
by comparing $q$-difference equations on both sides.
On the left hand side, one can automatically calculate them
by ~\cite{An0,TT} (See also ~\cite[\S4.3]{BJM} and
~\cite[Proposition 4.2]{TT}).
On the right hand side,
the algorithm reviewed in ~\cite[\S7.2 (Step 1),(Step 2),(Step 3)]{Tsu3} gives a nontrivial $q$-difference
equation by putting $(S,D)=(S_{2,2},D_{1}),(S_{4,2,2},D_{3,3}),(S_{6,3,3,2,2},D_{3,3,1,1}),(S_{6,2,3,2},D_{3,1,3})$, respectively
(for $H_{a+1}$, we apply the algorithm to the sum $\sum_{v,s,w,t,u\geq 0}...$ instead of $\sum_{s,t,u,v,w\geq 0}...$), where
$Y_{a_1,...,a_b}=\{(c_1,\dots,c_b)\in\mathbb{Z}^b\mid 0\leq c_j< a_j\}$ for $Y\in\{S,D\}$.
\end{proof}

\begin{Rem}\label{recov}
The right hand side in Proposition
\ref{computerproof} for $x=1$ is nothing but the left hand side
of
Theorem \ref{MAINA2oddN3} for $n=1$, $\varepsilon=1$, $b=2(a-1)$,
Theorem \ref{MAINA2oddN3} for $n=1$, $\varepsilon=0$, $b=2(a-1)$,
Conjecture \ref{MAINA2oddConj} for $n=1$, $b=2(a-1)$,
Theorem \ref{MAINA2oddN3} for $n=2$, $\varepsilon=1$, $b=2a-1$, respectively. We display each quadratic term below.
\begin{align*}
  Q^{(1)}_1(i,j_1)
  &=
  i^2+2j_1^2+2ij_1,\\
  Q^{(0)}_{1}(i,j_1,j_2)
  &=
  2i^2+2j_1^2+4j_2^2+2ij_1+4ij_2+4j_1j_2,\\
  Q_1(i,j,k,\ell_1,\ell_2)
  &=
  i^2+9j^2+9k^2+2\ell_1^2+4\ell_2^2\\
  &\quad
  +6ij+6ik+2i\ell_1+6i\ell_2+18jk+6j\ell_1+12j\ell_2+6k\ell_1+12k\ell_2+4\ell_1\ell_2,\\
Q^{(1)}_{2}(i,j_1,j_2,j_3)
&=
2i^2+2j_1^2+4j_2^2+6j_3^2+2ij_1+4ij_2+6ij_3+4j_1j_2+4j_1j_3+8j_2j_3.
\end{align*}
\end{Rem}

\begin{Rem}\label{KRanotherproof}
  When $n=1$ and $b=0$ (resp. $b=2$), Conjecture \ref{MAINA2oddConj} is equivalent to
  the partition theorem $\sum_{\lambda\in\HH_2}q^{|\lambda|}=\frac{(q^6;q^{12})_{\infty}}{(q^2,q^3,q^4,q^8,q^9,q^{10};q^{12})_{\infty}}$
  (resp. $\HH_3\PT T^{(12)}_{4,5,6,7,8}$) in \eqref{KR12} by the third identity in Proposition \ref{computerproof}
  for $a=1$ (resp. $a=2$). 
  As noted in ~\cite[Theorem 2, Theorem 3]{KR2},
  the partition theorem $\HH_1\PT T^{(12)}_{1,4,6,8,11}$ in \eqref{KR12} follows from these.
\end{Rem}

\hspace{0mm}

\noindent{\bf Acknowledgments.}
We are grateful to Ole Warnaar for helpful discussions, especially for bringing the papers ~\cite{War,War2} to our attention.

\end{document}